\documentclass[12pt]{article}

\usepackage[margin=1in]{geometry}

\usepackage{amsmath,amssymb,amsthm,mathtools}

\usepackage{booktabs}
\usepackage{tabularx}
\usepackage{longtable}
\usepackage{enumerate}
\usepackage{lineno}

\usepackage{graphicx}
\usepackage{tikz}
\usetikzlibrary{arrows.meta,calc,decorations.pathreplacing,positioning}

\usepackage{pdflscape}

\usepackage{microtype}

\usepackage{fvextra}
\DefineVerbatimEnvironment{NumWrap}{Verbatim}{breaklines,breakanywhere,fontsize=\footnotesize}

\usepackage[hidelinks]{hyperref}

\newtheorem{remark}{Remark}
\newtheorem{theorem}{Theorem}
\newtheorem{lemma}{Lemma}
\newtheorem{proposition}{Proposition}
\newtheorem{corollary}{Corollary}
\numberwithin{equation}{section}
\newtheorem{definition}{Definition}

\title{High-Precision Framework for Expected Hitting Times Analysis in the Dice-Sum Process}

\author{
Tipaluck Krityakierne\thanks{Department of Mathematics, Faculty of Science, Mahidol University, Bangkok 10400, Thailand; Centre of Excellence in Mathematics, MHESI, Bangkok 10400, Thailand. Email: \texttt{tipaluck.kri@mahidol.ac.th}}
\and
Thotsaporn Aek Thanatipanonda\thanks{Science Division, Mahidol University International College, Nakhon Pathom 73170, Thailand. Email: \texttt{thotsaporn@gmail.com}}
}

\date{}

\begin{document}

\maketitle

\begin{abstract}
We study the expected number of rolls required for the cumulative sum of a fair six-sided die to first enter a prescribed target set~$\mathcal{H}\subset\mathbb{Z}_{\ge0}$.
A one-variable dynamic-programming formulation is introduced that removes dependence on the roll count.
Within this framework, the infinite process is truncated at a large cutoff~$N$ and corrected by an analytically derived overshoot term that accounts for the rare event of exceeding~$N$ before entering~$\mathcal{H}$.
Explicit bounds on this residual yield a strict two-sided estimate of the truncation error.
The method is numerically efficient, requiring constant memory and linear time in the cutoff.
For the perfect-square target set $\mathcal{H}=\{n^2:n\in\mathbb{N}\}$, all quantities are evaluated explicitly, yielding
\[
\mathbb{E}[T]=7.07976423755110510389555305690818489468\ldots,
\]
provably correct to 1{,}017 decimal places.
This constitutes the most precise result known to date and establishes a general framework for high-accuracy computation of discrete hitting times.
\end{abstract}

\noindent\textbf{Keywords:}
Expected hitting time; truncated dynamic programming; dice-sum process; perfect squares; high-precision computation.

\medskip

\noindent\textbf{2020 Mathematics Subject Classification:}
60J10; 65C40; 68W25; 60-08.

\bigskip

\section{Introduction}

Consider repeatedly rolling a fair six-sided die and accumulating the sum of outcomes until the total belongs to a prescribed target set~$\mathcal H \subset \mathbb{Z}_{\ge0}$.  
We are interested in the expected number of rolls required for this \emph{hitting time}—the first time the cumulative sum enters~$\mathcal H$.  
Formally, let $S_t$ denote the cumulative sum after $t$ independent die rolls (with $S_0=0$ for the moment), and define the stopping time
\[
T=\inf\{t\ge0: S_t \in \mathcal H\}.
\]
Our objective is to compute the expected value $\mathbb{E}[T]$ with \emph{rigorous high precision}.  A canonical example is $\mathcal H=\{n^2:n\in\mathbb N\}$, corresponding to the case in which the cumulative sum is required to reach a perfect square.  
In this example, $\mathcal H$ is \emph{sparse}, with distances between successive elements increasing without bound, so the hitting-time behavior differs markedly from that of classical problems involving a single threshold or dense target sets.

Hitting times for random walks and renewal processes are fundamental objects of study, dating back to classical analyses such as gambler’s ruin and renewal theory~\cite{Feller1957,Karlin1968}.  
Here, $(S_t)$ forms a renewal process with \textcolor{black}{independent and identically distributed (i.i.d.)~increments}, each uniformly distributed on $\{1,\dots,6\}$, whose first entrance into~$\mathcal H$ defines a stopping event of renewal type~\cite{Cox1962}.
For dense or regularly spaced target sets, renewal theory provides explicit formulas and asymptotic estimates, whereas for sparse or irregular sets~$\mathcal H$, such as the perfect squares, prime numbers, or recurrence sequences, classical first-passage results are no longer directly applicable.
Since the increment distribution is aperiodic, that is, the possible step sizes
$\{1,\dots,6\}$ have greatest common divisor $1$, the process can reach every sufficiently large integer with positive probability, and many natural infinite target sets are hit with probability~$1$~\cite{Feller1971}.
The main challenge lies in computing the expected hitting time accurately.

To address this challenge, we propose a dynamic-programming framework that represents the expected hitting time by a \emph{one-variable backward recursion} on an unbounded state space.  
While conceptually related to truncation-based methods in reinforcement learning and control theory, such as the Truncated Approximate Dynamic Programming (TADP) framework~\cite{Farahmand2016} for Markov Decision Processes with large but finite horizons, our approach differs fundamentally.  
Their formulation concerns a finite-horizon problem and employs a learned, task-dependent terminal value, whereas ours treats an \emph{infinite-horizon} process with a fixed analytic boundary and derives rigorous probabilistic bounds on the truncation error.

In our framework, the infinite process is truncated at a large cutoff~$N$ and corrected by an analytically derived \emph{overshoot term} that accounts for the rare event of exceeding~$N$ before entering~$\mathcal{H}$.  
Explicit upper and lower bounds for this residual yield a strict two-sided estimate of the truncation error.  
This combination of analytic error control and a memory-efficient one-variable recursion provides certified high-precision estimates of~$\mathbb{E}[T]$, applicable to a broad class of target sets.

\paragraph{Historical development of dice-sum problems}
In 2017, DasGupta~\cite{DG} discussed the expected number of die rolls $\mathbb{E}[\tau]$ required for the cumulative sum to reach a \emph{prime number}, in a puzzle published in the \emph{Bulletin of the Institute of Mathematical Statistics}. 
He gave a lower bound of approximately~2.34 for $\mathbb{E}[\tau]$ but did not provide a rigorous exact value. 
The problem subsequently appeared in Matthew~M.~Conroy’s 2018 online collection \emph{A Collection of Dice Problems}~\cite{Con}, where Problem~41 revisited the prime-hitting case and presented an empirical estimate of $\mathbb{E}[\tau]$ to 500~decimal places, accompanied by the cautious remark that no proof of correctness was known. 
Conroy also posed the perfect-square analogue (Problem~40), obtaining
\[
\mathbb{E}[T] = 7.079764237551105103895,
\]
which was rigorously verified to be correct to 21~decimal places.
His computation simultaneously tracked two variables, namely the roll count and the cumulative sum, requiring a two-dimensional iteration.

Recently, a rigorous analysis for the case $\mathcal{H}$ being the set of prime numbers was carried out by Alon and Malinovsky~\cite{AM}, who developed a dynamic-programming formulation defining $p(r,n)$, similar to the two-variable method of Conroy, as the probability that the cumulative sum equals a non-prime~$n$ after $r$ rolls without having previously hit a prime. Their computed value of $\mathbb{E}[\tau]$ was proved to be correct to 7~decimal places.
Related studies by Martinez and Zeilberger~\cite{Z1}, as well as the subsequent collaboration of Alon, Malinovsky, Martinez, and Zeilberger~\cite{Z2}, analyzed analogous dice-sum hitting problems for prime and other arithmetic target sets, using a symbolic generating-function approach. These formulations track both the roll count and cumulative sum, resulting in a two-dimensional state space with greater computational and storage requirements.

\paragraph{Contributions and novelty}
Building on these developments, we introduce a computationally efficient framework for evaluating expected hitting times to a target set in discrete random walks.
\begin{enumerate}[(i)]
\item In contrast to earlier two-variable formulations, our method expresses the dice-sum hitting problem as a \emph{one-variable backward recurrence} that removes roll-count dependence and uses only the six most recent values, yielding \emph{constant-memory} complexity.
\item The framework provides rigorous analytical error bounds and certified high-precision estimates, offering explicit two-sided control  of the truncation error and scaling efficiently to very large cutoffs~$N.$ 
\item For the specific target set $\mathcal H = \{n^2 : n \in \mathbb N\}$, the correction constants are determined rigorously, yielding results accurate to 1{,}017~decimal places and exceeding the 21-decimal-place precision previously reported in~\cite{Con}.
\end{enumerate}

Beyond this example, the approach applies broadly to other target sets, such as primes or polygonal numbers.
Together, these advances establish a rigorous and efficient connection between discrete probability, numerical analysis, and computational simulation.


The remainder of this paper is organized as follows.  
Section~\ref{sec:truncated} presents the general dynamic-programming framework for arbitrary target sets~$\mathcal{H}$ and establishes its well-posedness, monotonicity, and rigorous two-sided truncation bounds (Theorem~\ref{thm:truncation_error_strict}).  
Section~\ref{sec:randomrolls} examines the hit–skip dynamics of the cumulative-sum process, deriving exponentially decaying bounds for the probability of ever hitting a given integer (Lemma~\ref{lemma:bound_p}).  
Section~\ref{sec:squares} applies the framework to the perfect-square target set and presents high-precision numerical results (Theorem~\ref{thm:expected-square}).  
Finally, Section~\ref{sec:future} discusses extensions to other classes of target sets and outlines directions for future work.

\section{Truncated Backward Recursion for the Expected Hitting Time}
\label{sec:truncated}

In this section we formulate a  dynamic-programming framework for the 
expected time required for a random walk to hit a prescribed set of target states.  
All results are stated for an arbitrary target set 
$\mathcal H\subset\mathbb Z_{\ge0}$, which may represent any subset of the nonnegative 
integers.  

\medskip
Fix a starting point $s \in \mathbb{Z}_{\ge 0}$, and define the cumulative-sum process
\begin{equation}
\label{eq:S}
S_t := s + \sum_{i=1}^t X_i, 
\qquad S_0 := s,
\end{equation}
where $(X_i)_{i \ge 1}$ are i.i.d.\ random variables uniformly distributed on $\{1,\dots,6\}$,
representing successive outcomes of fair die rolls.

Define the stopping time
\[
T := \inf\{\, t \ge 0 : S_t \in \mathcal H \,\},
\]
that is, the first time the cumulative sum enters the target set~$\mathcal H$.

For each starting point $s\ge0$, define the expected number of rolls until the cumulative sum first hits~$\mathcal H$ by
\[
E(s):=\mathbb{E}_s[T],
\]
where $\mathbb{E}_s[\cdot]$ denotes expectation conditional on $\{S_0=s\}$.
When $S_0 \in \mathcal{H}$, the definition of $T$ implies $T=0$, so the process is immediately absorbed and $E(s)=0$ for such $s$.

Conditioning on the outcome of the first die roll gives
\[
T =
\begin{cases}
0, & s\in\mathcal{H},\\[4pt]
1 + T', & s\notin\mathcal{H},
\end{cases}
\]
where $T'$ denotes the remaining time to reach~$\mathcal{H}$ starting from the new state
$S_1 = s + X_1$.

Taking expectations conditional on $\{S_0 = s\}$, we obtain the backward recursion
\begin{equation}\label{eq:main}
E(s) =
\begin{cases}
0, & s \in \mathcal{H}, \\[6pt]
1 + \tfrac{1}{6}\displaystyle\sum_{i=1}^6 E(s+i), & s \notin \mathcal{H},
\end{cases}
\end{equation}
for all $s \ge 0$.

Since \textcolor{black}{recursion}~\eqref{eq:main} involves infinitely many states, direct computation is not feasible.
A standard approximation is to truncate the state space by restricting \textcolor{black}{the running sum to values up to a large cutoff value~$N \in \mathbb{N}$}.
Formally, for a fixed cutoff~$N \in \mathbb{N}$, the truncated expectation is defined by
\begin{equation}\label{eq:ENdef}
E_N(s) \;=\;
\begin{cases}
0, & s \in \mathcal H, \ s  \le N, \\[6pt]
1 + \tfrac{1}{6}\displaystyle\sum_{i=1}^6 E_N(s+i), & 0 \le s \le N, \ s\notin\mathcal H,\\[10pt]
0, & s > N.
\end{cases}
\end{equation}
The last line imposes a zero terminal boundary beyond~$N$, closing the backward recursion on~${0,\dots,N}$, which is then solved recursively from~$N$ down to~$s$.

To formalize the truncated framework, we collect and define the relevant stopping times and the associated overshoot event in the following definition, which will be used throughout this work.

\begin{definition}[Stopping times and overshoot event]
\label{def:stopping-times}
Let $\mathcal{H} \subset \mathbb{Z}_{\ge 0}$ be a target set and $N \in \mathbb{N}$ a cutoff.  
Define
\[
T := \inf\{\, t \ge 0 : S_t \in \mathcal{H} \,\}, \qquad
\omega_N := \inf\{\, t \ge 0 : S_t > N \,\}, \qquad
A_N := \{\, \omega_N < T \,\},
\]
where $T$ is the first hitting time of the target set~$\mathcal{H}$,
$\omega_N$ is the first time the cumulative sum exceeds the cutoff~$N$,
and $A_N$ is the \emph{overshoot event}, corresponding to paths that cross~$N$
before reaching~$\mathcal{H}$.
\end{definition}

\begin{proposition}[Probabilistic representation and error decomposition]
\label{prop:identification}
For every \(N \in \mathbb{N}\) and \(s \ge 0\), define
\[
F(s) := \mathbb{E}_s\big[\min(T,\omega_N)\big].
\]
Then, \(F\) satisfies the same recursion and boundary conditions as~\eqref{eq:ENdef}, and hence provides a solution of the truncated recursion.
Moreover,
\begin{equation}\label{eq:err-identity-F}
E(s) - F(s)
\;=\;
\mathbb{E}_s\!\big[(T - \omega_N)\,\mathbf{1}_{\{\omega_N < T\}}\big].
\end{equation}
In words, \(\min(T,\omega_N)\) represents the truncated stopping time, and
\((T-\omega_N)\mathbf{1}_{\{\omega_N<T\}}\) is the additional waiting time incurred along paths that overshoot the cutoff \(N\) before hitting the target set~\(\mathcal{H}\).
\end{proposition}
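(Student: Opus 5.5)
The plan is to verify the recursion for $F$ by a first-step (one-roll) analysis, and then to obtain the error identity from the pointwise decomposition $T = \min(T,\omega_N) + (T-\omega_N)\mathbf{1}_{\{\omega_N<T\}}$.

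\textbf{Finiteness.} First I will check that $F$ is finite. Each roll increases $S_t$ by at least one. Starting from $s\le N$, the walk therefore exceeds $N$ after at most $N-s+1$ rolls, so $\omega_N \le N-s+1$ deterministically. Hence $0\le \min(T,\omega_N)\le N+1$, and $F$ is bounded on $\{0,\dots,N\}$.

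\textbf{Boundary cases.} If $s>N$, then $\omega_N=0$, so $F(s)=0$. If $s\le N$ and $s\in\mathcal H$, then $T=0$, so again $F(s)=0$. These are exactly the two boundary lines of \eqref{eq:ENdef}.

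\textbf{Interior case.} For $0\le s\le N$ with $s\notin\mathcal H$, both $T\ge1$ and $\omega_N\ge1$. Let $\theta$ be the shift by one roll. Then
\[
T = 1+T\circ\theta, \qquad \omega_N = 1+\omega_N\circ\theta,
\]
so $\min(T,\omega_N) = 1+\min(T,\omega_N)\circ\theta$. Conditioning on $X_1=i$ and using the Markov property of the i.i.d.\ increments, the shifted quantity has the law of $\min(T,\omega_N)$ under $\mathbb{E}_{s+i}$. This gives
\[
F(s) = 1+\tfrac16\sum_{i=1}^6 F(s+i).
\]
So $F$ satisfies \eqref{eq:ENdef}. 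Since that recursion is solved downward from $N$ with all values beyond $N$ fixed at $0$, its solution is unique, and therefore $F=E_N$.

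\textbf{Error identity.} Pointwise, if $T\le\omega_N$ then $T=\min(T,\omega_N)$. If $\omega_N<T$ then $T=\omega_N+(T-\omega_N)$. Together these give
\[
T=\min(T,\omega_N)+(T-\omega_N)\mathbf{1}_{\{\omega_N<T\}}.
\]
Both summands are nonnegative, so taking $\mathbb{E}_s$ is legitimate by linearity for nonnegative variables, with values allowed in $[0,\infty]$. This yields $E(s)=F(s)+\mathbb{E}_s[(T-\omega_N)\mathbf{1}_{\{\omega_N<T\}}]$.

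Because $F(s)$ is finite, we may subtract it to obtain \eqref{eq:err-identity-F}. This step holds even if $E(s)=\infty$, with both sides then infinite; in the intended application, $E(s)<\infty$.

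\textbf{Main obstacle.} The only point needing care is this finiteness and integrability issue: the subtraction is valid only because $F$ is bounded. The one-step shift argument is routine once the Markov property of $(S_t)$ is invoked.
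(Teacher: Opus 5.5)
Your proof is correct and follows the same route as the paper: a first-step analysis for the interior recursion, direct checks of the two boundary cases, and the pointwise decomposition $T=\min(T,\omega_N)+(T-\omega_N)\mathbf{1}_{\{\omega_N<T\}}$ followed by taking expectations. Your observation that $\min(T,\omega_N)\le N+1$ makes $F$ finite, so the subtraction is legitimate even when $E(s)=\infty$; this is a worthwhile detail the paper leaves implicit.
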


\begin{proof}
A first-step analysis gives, for non-absorbing $s\le N$ that are not in $\mathcal H$,
\[
F(s)=1+\tfrac16\sum_{i=1}^6 F(s+i).
\]
If $s\in\mathcal H,$ then $\min(T,\omega_N)=0$, so $F(s)=0$. 
If $s>N$ then $\omega_N=0$, hence $F(s)=0$. 
Thus, $F$ satisfies the same system \eqref{eq:ENdef}, showing that it is a 
solution of the truncated recursion. 

For the error identity \eqref{eq:err-identity-F}, decompose
\[
T=\min(T,\omega_N)+\bigl(T-\omega_N\bigr)\,\mathbf{1}_{\{\omega_N<T\}},
\]
take $\mathbb{E}_s[\cdot]$, and use $\mathbb{E}_s[\min(T,\omega_N)]=F(s)$.
\end{proof}

Having established a probabilistic representation $F$ that solves \eqref{eq:ENdef}, 
we now verify that the truncated recursion \eqref{eq:ENdef} 
is well posed and that its solution increases with the cutoff parameter~$N$.

\begin{proposition}[Well-posedness and monotonicity]\label{prop:mono}
For each $N\in\mathbb{N}$ the system \eqref{eq:ENdef} has a unique solution 
$E_N:\mathbb{Z}_{\ge0}\to[0,\infty)$. Moreover, for any $N'<N$ and any $s\ge0$,
\[
E_{N'}(s)\le E_N(s)\le E(s).
\]
\end{proposition}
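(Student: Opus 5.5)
Existence and uniqueness come directly from the structure of \eqref{eq:ENdef}. Every step increments the sum by at least one, so $E_N(s)$ depends only on values at strictly larger arguments. The values for $s>N$ are fixed at $0$. We then define $E_N(N),E_N(N-1),\dots,E_N(0)$ successively by backward induction: each value is either $0$ (if $s\in\mathcal H$) or $1+\frac16\sum_{i=1}^6 E_N(s+i)$, where every term on the right is already determined. The same induction shows that any two solutions agree at every $s$, which gives uniqueness. Nonnegativity follows by the same induction, since each value is $0$ or $1$ plus an average of nonnegative numbers. Proposition~\ref{prop:identification} then lets us identify $E_N(s)=F(s)=\mathbb{E}_s[\min(T,\omega_N)]$, where we now write $\omega_N$ explicitly with its dependence on $N$.

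For monotonicity, the cleanest route is probabilistic. If $N'<N$, then the event $\{S_t>N\}$ is contained in $\{S_t>N'\}$, so $\omega_{N'}\le\omega_N$ pathwise. Hence $\min(T,\omega_{N'})\le\min(T,\omega_N)\le T$ pointwise. Taking $\mathbb{E}_s$ gives $E_{N'}(s)\le E_N(s)\le \mathbb{E}_s[T]=E(s)$, and this holds even if $E(s)=+\infty$. One subtlety is the claim that $E_N$ takes values in $[0,\infty)$. This is clear from the backward construction, and also from $\min(T,\omega_N)\le\omega_N\le N+1-s$, because each roll raises the sum by at least one.

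I could also give a purely analytic comparison that avoids the representation: prove $E_{N'}(s)\le E_N(s)$ by downward induction on $s$. For $s>N'$ we have $E_{N'}(s)=0\le E_N(s)$. For $s\le N'$, either both values vanish because $s\in\mathcal H$, or both satisfy the recursion and the inequality propagates from the larger arguments. I will mention this as an alternative check. For the bound $E_N\le E$, however, the probabilistic argument is preferable, because $E$ is defined as an expectation and not as a solution of a finite system; \eqref{eq:main} alone need not determine $E$ uniquely.

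The only real obstacle is this last inequality $E_N(s)\le E(s)$. It must be phrased through the stopping-time representation, since an inductive argument on \eqref{eq:main} has no terminal boundary to start from. Once Proposition~\ref{prop:identification} supplies $E_N(s)=\mathbb{E}_s[\min(T,\omega_N)]$, the inequality reduces to the pointwise bound $\min(T,\omega_N)\le T$, and the proof is short.
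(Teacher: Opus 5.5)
Your proposal is correct and follows essentially the same route as the paper. Existence and uniqueness come from backward substitution from the terminal condition. You then identify $E_N(s)=\mathbb{E}_s[\min(T,\omega_N)]$ via Proposition~\ref{prop:identification} plus uniqueness, and use the pathwise inequality $\min(T,\omega_{N'})\le\min(T,\omega_N)\le T$ before taking expectations. One minor nit: your bound $\omega_N\le N+1-s$ fails for $s>N+1$, where $\omega_N=0$ and the right side is negative. In general you should write $\omega_N\le\max(0,N+1-s)$.
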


\begin{proof}
\emph{Existence and uniqueness.}
By backward dynamic programming, the values at states $s \le N$ are determined 
recursively from those at $s+1,\dots,s+6$, subject to absorbing boundary conditions at 
the target set~$\mathcal{H}$ and the terminal condition $E_N(s)=0$ for $s > N$.  
Since the recursion depends only on higher states, it can be solved uniquely by backward substitution.

\emph{Monotonicity in $N$.}
By Proposition~\ref{prop:identification}, the function $F(s)=\mathbb{E}_s[\min(T,\omega_N)]$
is a solution of \eqref{eq:ENdef}; by the uniqueness just established, $E_N=F$. 
Since $\omega_N$ is nondecreasing in $N$, we have 
$\min(T,\omega_{N'})\le \min(T,\omega_N)\le T$ almost surely (a.s.). 
Taking expectations gives $E_{N'}(s)\le E_N(s)\le E(s)$.
\end{proof}

For ease of reference, we record the following consequence of Propositions~\ref{prop:identification} and~\ref{prop:mono}.

\begin{corollary}[Probabilistic representation of $E_N$]\label{cor:FequalsEN}
Fix \(N\in\mathbb N\). The truncated recursion \eqref{eq:ENdef} has a unique solution \(E_N\colon \mathbb{Z}_{\ge0}\to\mathbb{R}\),
\[
E_N(s)=\mathbb{E}_s\!\big[\min(T,\omega_N)\big].
\]
Consequently, for all \(s\ge0\),
\[
E(s)-E_N(s)
=\mathbb{E}_s\!\big[(T-\omega_N)\,\mathbf{1}_{\{\omega_N<T\}}\big].
\]
\end{corollary}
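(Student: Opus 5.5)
The corollary follows by combining the two preceding propositions, so the plan is to assemble them carefully and to be explicit about where finiteness is used.

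The first step is existence and uniqueness. The truncated system \eqref{eq:ENdef} prescribes $E_N(s)=0$ for every $s>N$. It also determines each value $E_N(s)$ with $0\le s\le N$ from the values at $s+1,\dots,s+6$, which are strictly larger states. Backward induction on $s=N,N-1,\dots,0$ therefore shows that any two solutions coincide. The same induction constructs a solution, as already recorded in Proposition~\ref{prop:mono}. This gives the unique solution $E_N\colon\mathbb Z_{\ge0}\to\mathbb R$.

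The second step is the identification with $F$. By Proposition~\ref{prop:identification}, the function $F(s)=\mathbb E_s[\min(T,\omega_N)]$ satisfies \eqref{eq:ENdef}, so uniqueness forces $E_N=F$.

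The one point I would make explicit is that $F$ is finite, since uniqueness only applies to real-valued solutions. Because every die roll increases the sum by at least $1$, we have $\omega_N\le N+1-s$ for $s\le N$, and $\omega_N=0$ for $s>N$. Hence $0\le\min(T,\omega_N)\le N+1$ almost surely, so $F$ is bounded and real-valued. This boundedness also justifies the first-step analysis in Proposition~\ref{prop:identification} without any integrability concerns.

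The third step is the error identity. Substituting $E_N=F$ into \eqref{eq:err-identity-F} gives
\[
E(s)-E_N(s)=\mathbb E_s\big[(T-\omega_N)\mathbf 1_{\{\omega_N<T\}}\big].
\]
This rests on the pointwise decomposition $T=\min(T,\omega_N)+(T-\omega_N)\mathbf 1_{\{\omega_N<T\}}$ and on linearity of expectation. I expect the only real obstacle to be this use of linearity: it needs the two terms to be nonnegative, which they are, together with finiteness of $\mathbb E_s[\min(T,\omega_N)]$, which was established above. With these, the identity holds in $[0,\infty]$ without further assumption, and it becomes a genuine real-valued identity whenever $E(s)<\infty$.
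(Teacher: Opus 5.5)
Your proposal is correct and takes the same route as the paper, which records this corollary as an immediate consequence of Proposition~\ref{prop:identification} ($F$ solves \eqref{eq:ENdef} and satisfies \eqref{eq:err-identity-F}) combined with the backward-substitution uniqueness from Proposition~\ref{prop:mono}. Your explicit bound $0\le\min(T,\omega_N)\le N+1$ makes $F$ real-valued, which is what lets uniqueness apply and lets you subtract $F(s)$ in $[0,\infty]$ even when $E(s)=\infty$; the paper leaves this detail implicit.
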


\medskip
The next step is to verify that the truncated expectations $E_N(s)$ 
indeed converge to the true values $E(s)$ as the cutoff $N$ grows.
To this end, recall that the Monotone Convergence Theorem states that if 
$(Y_N)_{N\ge1}$ is a sequence of nonnegative measurable random variables 
such that $Y_N\uparrow Y$ a.s., then \cite{Billingsley}
\[
\mathbb{E}[Y_N]\;\uparrow\;\mathbb{E}[Y]
\qquad (\text{possibly }+\infty).
\]
In our setting, $Y_N=\min(T,\omega_N)$ is nonnegative and nondecreasing in $N$, 
with $Y_N\uparrow T.$ 
Thus, together with Corollary~\ref{cor:FequalsEN}, 
we immediately obtain the following corollary.

\begin{corollary}[Monotone convergence]\label{cor:monotone-conv}
For each $s\ge0$,
\[
E_N(s)=\mathbb{E}_s[\min(T,\omega_N)]
\;\uparrow\;
\mathbb{E}_s[T]=E(s)\in[0,\infty].
\]
\end{corollary}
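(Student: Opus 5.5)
The proof will apply the Monotone Convergence Theorem pathwise to the random variables $Y_N=\min(T,\omega_N)$ under $\mathbb{P}_s$, and then identify the limits of the expectations using Corollary~\ref{cor:FequalsEN}. The work is in checking the hypotheses: nonnegativity, monotonicity in $N$, and almost sure convergence $Y_N\uparrow T$, including on the event $\{T=\infty\}$.

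\emph{Step 1: nonnegativity and monotonicity.} $Y_N\ge 0$ is clear. Since $\{S_t>N\}\supseteq\{S_t>N+1\}$ for every $t$, the hitting time $\omega_N$ is nondecreasing in $N$. Hence $Y_N=\min(T,\omega_N)$ is nondecreasing in $N$ pointwise on the sample space. This is the same observation used in Proposition~\ref{prop:mono}.

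\emph{Step 2: almost sure convergence.} Each increment satisfies $X_i\ge 1$, so $S_t\ge s+t$. Therefore $\omega_N\le \max(N-s+1,0)<\infty$ for every $N$. Also $S_t\le s+6t$, which gives $\omega_N\ge (N-s)/6$, so $\omega_N\to\infty$ surely as $N\to\infty$. Consequently, for every sample point:
\begin{itemize}
\item if $T<\infty$, then $Y_N=T$ as soon as $\omega_N\ge T$, which holds for all large $N$;
\item if $T=\infty$, then $Y_N=\omega_N\uparrow\infty=T$.
\end{itemize}
So $Y_N\uparrow T$ holds everywhere, not merely almost surely, with values in $[0,\infty]$.

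\emph{Step 3: pass to expectations.} The Monotone Convergence Theorem gives $\mathbb{E}_s[Y_N]\uparrow\mathbb{E}_s[T]$, and the limit is allowed to be $+\infty$. By Corollary~\ref{cor:FequalsEN}, $\mathbb{E}_s[Y_N]=E_N(s)$. By definition, $\mathbb{E}_s[T]=E(s)$. Together these yield the claim, with $E(s)\in[0,\infty]$.

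There is no real obstacle here. The only point that needs care is the case $T=\infty$, where $E(s)$ may be infinite. This case is handled by allowing extended-real values in the Monotone Convergence Theorem and by using the sure divergence $\omega_N\to\infty$ from Step 2. That divergence is exactly what the bound $S_t\le s+6t$ supplies.
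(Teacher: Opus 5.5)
Your proposal is correct and follows the paper's own route: apply the Monotone Convergence Theorem to $Y_N=\min(T,\omega_N)$, then use Corollary~\ref{cor:FequalsEN} to identify $\mathbb{E}_s[Y_N]$ with $E_N(s)$. Your Step~2 also proves that $Y_N\uparrow T$ surely, including on $\{T=\infty\}$ via $\omega_N\ge (N-s)/6\to\infty$, which the paper only asserts.
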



The monotone convergence in Corollary~\ref{cor:monotone-conv} ensures that the truncated expectations $E_N(s)$ converge to the true value $E(s)$ as the cutoff~$N$ increases.
However, it does not quantify the magnitude of the truncation error for finite~$N$. The next theorem provides a rigorous quantitative bound on the truncation error in approximating~$E(s)$.

\begin{theorem}[Error bound for the truncated expectation]
\label{thm:truncation_error_strict}
Assume there exist finite constants $L_N < U_N$ (depending on $N$ and the geometry of~$\mathcal{H}$) such that 
for all $s \le N$ with $\mathbb{P}_s(A_N) > 0$,
\begin{equation}\label{eq:assump_strict}
L_N < \mathbb{E}_s\!\big[T - \omega_N \,\big|\, A_N\big] < U_N.
\end{equation}
Then,
\begin{equation}\label{eq:two_sided_strict}
0 < E(s) - \bigl(E_N(s) + L_N\,\mathbb{P}_s(A_N)\bigr)
< (U_N - L_N)\,\mathbb{P}_s(A_N).
\end{equation}
\end{theorem}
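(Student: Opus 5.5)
The plan is to start from the error identity in Corollary~\ref{cor:FequalsEN},
\[
E(s)-E_N(s)=\mathbb{E}_s\!\big[(T-\omega_N)\,\mathbf{1}_{A_N}\big],
\]
and rewrite its right-hand side as a conditional expectation. Whenever $\mathbb{P}_s(A_N)>0$, the definition of conditional expectation given an event yields
\[
\mathbb{E}_s\!\big[(T-\omega_N)\mathbf{1}_{A_N}\big]=\mathbb{P}_s(A_N)\,\mathbb{E}_s\!\big[T-\omega_N\,\big|\,A_N\big].
\]
This is legitimate as an identity in $[0,\infty]$, since $T-\omega_N>0$ on $A_N$. The two sides of the assumption~\eqref{eq:assump_strict} then handle the two tails: the finiteness of $U_N$ forces the conditional mean to be finite, so $E(s)<\infty$ and subtracting $E_N(s)$ is meaningful.

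Next I will subtract $L_N\,\mathbb{P}_s(A_N)$, which gives
\[
E(s)-E_N(s)-L_N\mathbb{P}_s(A_N)=\mathbb{P}_s(A_N)\Big(\mathbb{E}_s[T-\omega_N\mid A_N]-L_N\Big).
\]
The strict inequalities in~\eqref{eq:assump_strict} make the bracket lie strictly in $(0,\,U_N-L_N)$. Multiplying by the positive number $\mathbb{P}_s(A_N)$ preserves both strict inequalities, and this gives~\eqref{eq:two_sided_strict}.

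The main obstacle is the degenerate case $\mathbb{P}_s(A_N)=0$. This happens, for example, when $s\in\mathcal H$, or when every path from $s$ meets $\mathcal H$ before exceeding $N$. In that case the left-hand side of~\eqref{eq:two_sided_strict} is $0$ and the strict inequalities fail. I therefore read the theorem, consistently with its hypothesis, as asserted for those $s\le N$ with $\mathbb{P}_s(A_N)>0$. I will state this restriction explicitly and note that in the excluded case $E(s)=E_N(s)$ exactly, by the same identity. This restriction to $s\le N$ is also where the hypothesis is used, so nothing further needs checking beyond measurability and the finiteness remark above.
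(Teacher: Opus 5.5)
Your proposal is correct and takes essentially the same route as the paper: combine the identity of Corollary~\ref{cor:FequalsEN} with $\mathbb{E}_s[(T-\omega_N)\mathbf{1}_{A_N}]=\mathbb{P}_s(A_N)\,\mathbb{E}_s[T-\omega_N\mid A_N]$, apply the strict bounds \eqref{eq:assump_strict}, and subtract $L_N\,\mathbb{P}_s(A_N)$. Your explicit restriction to $s\le N$ with $\mathbb{P}_s(A_N)>0$ is a welcome refinement, as is your remark that a finite $U_N$ forces $E(s)<\infty$; the paper's proof makes the same restriction only tacitly and treats the degenerate case separately in its Remark.
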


\medskip

The constants $L_N$ and $U_N$ in Theorem~\ref{thm:truncation_error_strict}
serve as lower and upper bounds on the expected residual time beyond
the truncation level~$N$.  
The probability $\mathbb{P}_s(A_N)$ corresponds to the event that the partial-sum
process exceeds~$N$ before reaching the target set~$\mathcal{H}$.  
Typically, $\mathbb{P}_s(A_N)$ decreases exponentially with the number of elements in~$\mathcal{H}$ that are $\le N$ (see Section~\ref{sec:squares}), so that the correction term $L_N\mathbb{P}_s(A_N)$ becomes negligible compared with the main term~$E_N(s)$.
Hence, in such cases, $E_N(s)$ captures the dominant contribution from trajectories that
terminate within~$[0,N]$.  
Consequently, Theorem~\ref{thm:truncation_error_strict} quantifies how closely the adjusted approximation
$E_N(s) + L_N\,\mathbb{P}_s(A_N)$ reproduces the true expectation~$E(s)$.

\begin{proof}[Proof of Theorem \ref{thm:truncation_error_strict}]
Fix $s\le N$ with $\mathbb P_s(A_N)>0$, so that the conditional expectation 
$\mathbb E_s[T-\omega_N\mid A_N]$ is well defined. 
By \eqref{eq:assump_strict} and the identity
\[
\mathbb E_s\!\big[(T-\omega_N)\mathbf 1_{A_N}\big]
=\mathbb P_s(A_N)\,\mathbb E_s\!\big[T-\omega_N\mid A_N\big],
\]
we obtain the strict bounds
\[
L_N\,\mathbb P_s(A_N)
\;<\;
\mathbb E_s\!\big[(T-\omega_N)\mathbf 1_{A_N}\big]
\;<\;
U_N\,\mathbb P_s(A_N).
\]
By Corollary~\ref{cor:FequalsEN},
$E(s)-E_N(s)=\mathbb E_s[(T-\omega_N)\mathbf 1_{A_N}]$,
and substituting into the above inequality yields
\[
L_N\,\mathbb P_s(A_N)\;<\;E(s)-E_N(s)\;<\;U_N\,\mathbb P_s(A_N).
\]
Subtracting $L_N\,\mathbb P_s(A_N)$ throughout gives
\[
0 \;<\; E(s) - \bigl(E_N(s)+L_N\,\mathbb P_s(A_N)\bigr)
\;<\; (U_N-L_N)\,\mathbb P_s(A_N),
\]
which is \eqref{eq:two_sided_strict}.
\end{proof}

\medskip


To compute the overshoot probability $\mathbb{P}_s(A_N)$, we employ a backward recursion (analogous to~\eqref{eq:ENdef}) derived via first-step analysis:
\begin{equation}\label{eq:ANrec}
\mathbb{P}_s(A_N)=
\begin{cases}
0, & s \in \mathcal H,\ s \le N,\\[3pt]
\dfrac{1}{6}\displaystyle\sum_{i=1}^6 \mathbb{P}_{s+i}(A_N), & 0 \le s \le N,\ s \notin \mathcal H,\\[6pt]
1, & s > N.
\end{cases}
\end{equation}
This relation expresses $\mathbb{P}_s(A_N)$ as the average of its six forward neighbors, with the boundary condition $\mathbb{P}_s(A_N)=1$ beyond the cutoff~$N$.
For numerical evaluation, the recursion is solved backward for
$s = N, N-1, \dots, 0$.

\medskip

\begin{remark}[Degenerate case]
If $\mathbb{P}_s(A_N)=0$, then $\mathbf{1}_{A_N}=0$ almost surely, and hence 
$E(s)-E_N(s)=0$.
In this situation, the approximation $E_N(s)$ coincides exactly with the true expectation~$E(s)$.
\end{remark}

\medskip
In Section~\ref{sec:squares}, we return to this truncated framework and specialize it to the concrete case $\mathcal{H} = \{n^2 : n \in \mathbb{N}\}$.

\section{Random Rolls and Hit/Skip Behavior}
\label{sec:randomrolls}

In this section we analyze the intrinsic random-roll dynamics of the process,
deriving universal constants for the hit and skip probabilities that later
govern the overshoot bounds $L_N$ and $U_N$ in the truncation analysis.

\medskip
We begin with a simpler random–walk model of repeated fair die rolls,
which admits explicit computation of the hit–skip probabilities
and clarifies their asymptotic behavior.


Let $(X_i)_{i\ge1}$ be i.i.d.\ random variables uniformly distributed on $\{1,\dots,6\}$, 
and define the cumulative sums
\[
W_t := X_1 + X_2 + \cdots + X_t, \qquad t=1,2,\dots .
\]
Here, $(W_t)$ represents the cumulative total after $t$ rolls.  (Note that $W_t$ starts from 0, whereas elsewhere we consider $S_t$ starting from a general $s\ge0$.)
 We now ask a basic question about the probability 
that this random walk ever visits a given integer value.

\medskip
\noindent\textbf{Question.}
Suppose we roll a fair die indefinitely. 
What is the probability that a given integer $n$ is ever attained?
Furthermore, for which $n$ is this probability maximized?

\medskip
\noindent\textbf{Answer.}
Let $p_n$ denote the probability of ever hitting $n$, that is, that $W_t = n$ for some $t$.
It turns out that the sequence $(p_n)$ \textcolor{black}{satisfies the following linear recurrence}
with constant coefficients (a so-called $C$-finite recurrence; see, e.g.,
\cite{Zeilberger2013,KauersPaule2011,KrityakierneThanatipanonda2024})\textcolor{black}{,}

\[
p_n=\frac{1}{6}\bigl(p_{n-1}+p_{n-2}+p_{n-3}+p_{n-4}+p_{n-5}+p_{n-6}\bigr),
\qquad
p_0=1,\;\; p_n=0\ \text{for }n<0.
\]
\textcolor{black}{Solving  the recurrence for $p_n$ gives}
\[
p_n=\frac{2}{7}+\frac{1}{7}u^{\,n}+\frac{1}{7}v_+^{\,n}+\frac{1}{7}v_{-}^{\,n}
+\frac{1}{7}w_+^{\,n}+\frac{1}{7}w_{-}^{\,n},
\]
where
\[
\begin{aligned}
u &= -0.6703320476, \\[4pt]
v_{\pm} &= -0.3756951992 \pm 0.5701751610\,i, 
&\quad |v_{\pm}| &= 0.6828225223, \\[4pt]
w_{\pm} &= \;\;0.2941945564 \pm 0.6683670974\,i,
&\quad |w_{\pm}| &= 0.7302499667.
\end{aligned}
\]

From this explicit form one checks that $p_n$ is maximized at $n=6$.
The first values of $p_n$ for $n\ge 1$ are
\[
\frac{1}{6},\;\frac{7}{36},\;\frac{49}{216},\;\frac{343}{1296},\;
\frac{2401}{7776},\;\frac{16807}{46656},\;\frac{70993}{279936},\;
\frac{450295}{1679616},\;\dots
\]

Since all \textcolor{black}{the} characteristic roots other than $1$ have modulus $<1$, the terms
$u^{n},v_+^{n},v_{-}^{n},w_+^{n},w_{-}^{n}$ decay exponentially, and therefore
\[
\lim_{n\to\infty} p_n=\frac{2}{7}\approx 0.28571428571.
\]
Intuitively, the partial sums advance by steps given by independent die rolls, uniformly distributed on $\{1,\dots,6\}$, \textcolor{black}{with a mean of} $3.5$.
As each integer can be visited only once, the probability $p_n$ of ever hitting $n$ approaches
$1/\mathbb{E}[X_1]=2/7$, \textcolor{black}{where $X_1$ denotes one such die roll.}
Thus, while the walk diverges to infinity, it visits only a fraction of integers—about $28.6\%$ of large $n$ are ever hit. Figure~\ref{fig:p_n} illustrates how $p_n$ evolves for $n$ from $1$ to $100$.

\begin{figure}
\begin{center}
\includegraphics[scale=0.6]{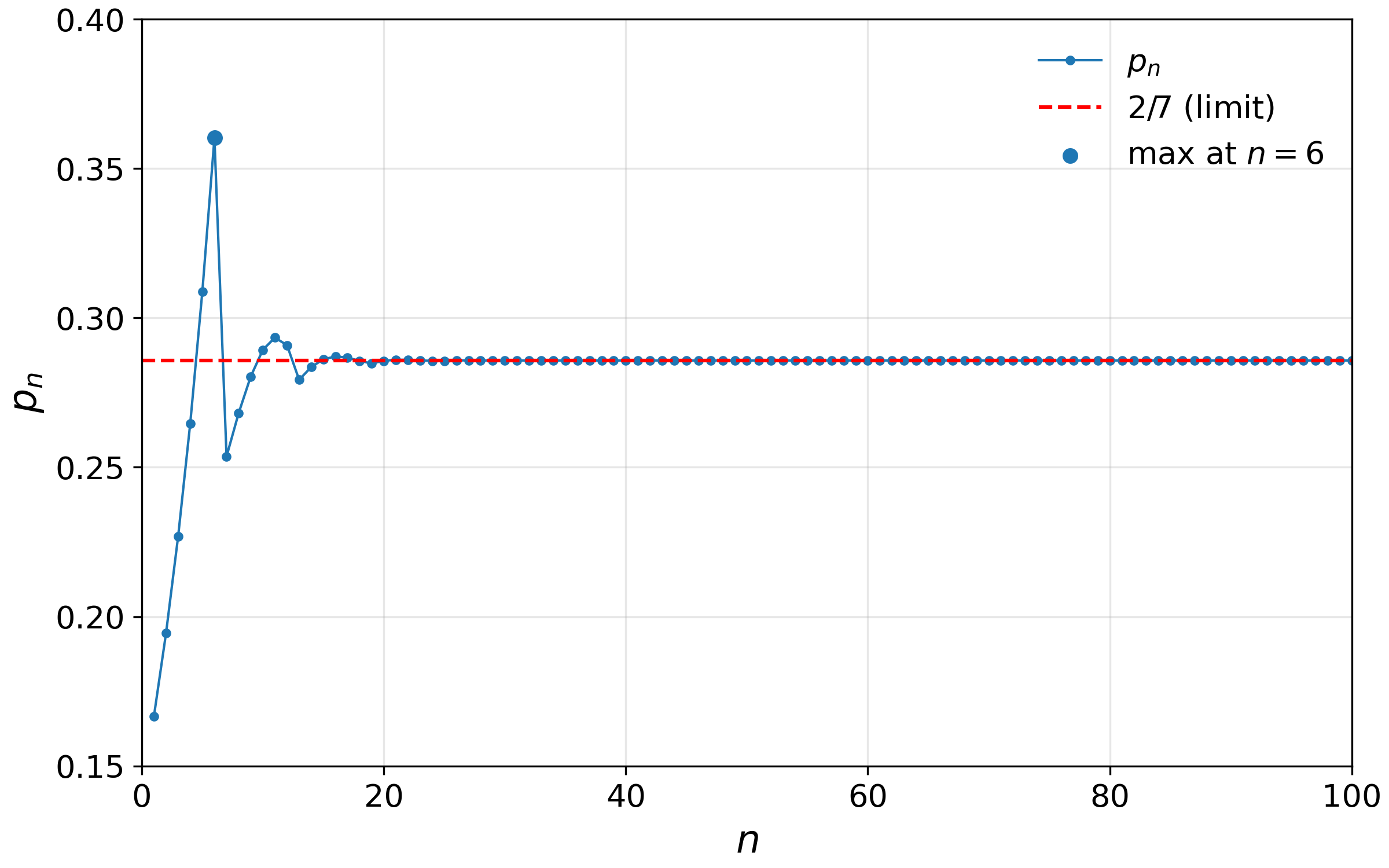}
\end{center}
\caption{Probability $p_n$ that the partial sums of fair die rolls ever hit \textcolor{black}{the integer} $n$, for $1\le n\le100$.}
\label{fig:p_n}
\end{figure}

\begin{lemma}
\label{lemma:bound_p}

Let $p_n$ denote the probability of ever hitting $n$, and let $q_n = 1 - p_n$ denote the probability of skipping (or missing) $n$.  
Then
\[
\frac{2}{7} - \varepsilon_n \le p_n \le \frac{2}{7} + \varepsilon_n,
\qquad
\frac{5}{7} - \varepsilon_n \le q_n \le \frac{5}{7} + \varepsilon_n,
\]
where $\varepsilon_n := \tfrac{5}{7}\,|w_+|^{\,n}$ and $|w_+| = 0.7302499667.$
\end{lemma}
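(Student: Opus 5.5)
We will work directly from the explicit closed form of $p_n$ stated above. Subtracting the limit gives
\[
p_n-\tfrac27=\tfrac17\bigl(u^n+v_+^n+v_-^n+w_+^n+w_-^n\bigr),
\]
and since $q_n-\tfrac57=-(p_n-\tfrac27)$, the two displayed inequalities are equivalent. It therefore suffices to show
\[
\bigl|p_n-\tfrac27\bigr|\le \tfrac57|w_+|^n .
\]
By the triangle inequality,
\[
\bigl|p_n-\tfrac27\bigr|\le\tfrac17\bigl(|u|^n+2|v_\pm|^n+2|w_\pm|^n\bigr).
\]
From the numerical values we have $|u|<|v_\pm|<|w_\pm|$, so each of the five terms is at most $|w_+|^n$. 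Summing gives the bound $\tfrac57|w_+|^n$ for every $n\ge0$.

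The substantive step is justifying the closed form itself, together with the ordering of the moduli. The characteristic polynomial of the recurrence is
\[
6x^6-x^5-x^4-x^3-x^2-x-1=(x-1)\,Q(x),
\qquad Q(x)=6x^5+5x^4+4x^3+3x^2+2x+1 .
\]
The five roots of $Q$ are distinct and all have modulus $<1$. This follows by Eneström--Kakeya, since the coefficients of $Q$ are strictly decreasing from the leading term; alternatively one can certify it by interval computation.

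The coefficients are obtained by solving with initial data $p_0=1$ and $p_{-1}=\dots=p_{-5}=0$. The constant term is $1/(\mu\cdot 6/6)$, where the renewal mean is $\mu=3.5$, which gives $2/7$. Each root $r$ of $Q$ receives coefficient $1/7$, which can be verified via the residue formula for the generating function
\[
\sum_n p_n x^n=\frac{1}{1-\tfrac16(x+\dots+x^6)} ,
\]
or simply accepted from the statement above.

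The main obstacle is making the modulus ordering rigorous rather than relying on the decimals. One option is to factor $Q$ into a linear factor and two real quadratics and read off $|v_\pm|^2$ and $|w_\pm|^2$ as the constant terms of those quadratics. The other is to bound the roots with certified numerics and check that $|u|$ and $|v_\pm|$ are less than $|w_+|$.

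Because the bound uses only $|r|^n\le|w_+|^n$ termwise, no cancellation analysis is needed. Since $p_n$ is given for all $n\ge0$, the statement holds for every $n\ge0$. At $n=0$ it reads $\tfrac27-\tfrac57\le1\le\tfrac27+\tfrac57$, which is consistent.
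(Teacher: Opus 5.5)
Your proposal is correct and follows the paper's proof: write $p_n-\tfrac27=\tfrac17\bigl(u^n+v_+^n+v_-^n+w_+^n+w_-^n\bigr)$, apply the triangle inequality, bound each modulus by $|w_+|$ using the numerical values, and transfer the bound to $q_n$ via $q_n-\tfrac57=-(p_n-\tfrac27)$.

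Your additional justification of the closed form goes beyond what the paper does and is not needed for the lemma as stated. This covers the generating-function check that each root of $Q$ gets coefficient $\tfrac17$, and Eneström--Kakeya, which in fact confines all roots of $Q$ to $\tfrac12\le|z|\le\tfrac56$. Note that Eneström--Kakeya does not give distinctness of the roots, but the argument never uses distinctness.
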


\begin{proof}
From the explicit formula,
\[
p_n-\frac{2}{7}
= \frac{1}{7}\bigl(u^n+v_+^n+v_{-}^n+w_+^n+w_{-}^n\bigr).
\]
Hence, by the triangle inequality and since $|u|,|v_+|,|v_{-}|\le |w_+|=|w_{-}|$,
\[
\Bigl|p_n-\frac{2}{7}\Bigr|
\le \frac{1}{7}\bigl(|u|^n+|v_+|^n+|v_{-}|^n+|w_+|^n+|w_{-}|^n\bigr)
\le \frac{5}{7}\,|w_+|^n \;=\; \varepsilon_n.
\]
Because $q_n=1-p_n$, we also have
$\bigl|q_n-\tfrac{5}{7}\bigr|=\bigl|p_n-\tfrac{2}{7}\bigr|\le \varepsilon_n$,
which yields the stated bounds.
\end{proof}

Note that Alon et al.~\cite{Z2} obtained a similar asymptotic statement (their Lemma~2.1), proving that the probability that the cumulative sum of fair die rolls hits a large integer~$x$ (or~$n$), denoted~$p(x)$ (or~$p_n$), converges exponentially to~$2/7$ as~$x$ increases.  
The key contrast with Lemma~1 here lies in the explicitness of the exponential decay rate.  
Lemma~2.1 in~\cite{Z2} establishes the general inequality
\[
|p(x)-\tfrac{2}{7}| \le C(1-\mu)^x, \qquad 0<\mu<1,
\]
for some unspecified positive constants~$C$ and~$\mu$.

\section{The Perfect-Square Case}
\label{sec:squares}

We now specialize the truncation framework of Section~\ref{sec:truncated} and the
random–roll analysis of Section~\ref{sec:randomrolls} to the perfect-square target set
$\mathcal{H} = \{n^2 : n \in \mathbb{N}\}$.  
The exclusion of~$0$ is natural, as the goal is to count the number of rolls required
for the cumulative sum to reach a perfect square, which presupposes that the process
starting from zero is not immediately absorbed.

In what follows, the notation of Definition~\ref{def:stopping-times} is used with this choice of~$\mathcal{H}$.  
The intervals between successive squares form distinct bands whose skip–hit behavior
is nearly geometric: using \(p_n = \tfrac{2}{7} + O(\varepsilon_n)\) with exponentially
small \(\varepsilon_n\) (Lemma~\ref{lemma:bound_p}), we obtain bounds for the first
square reached after the cutoff.  
These yield closed-form overshoot constants \(L_N\) and \(U_N\) satisfying
the conditions of Theorem~\ref{thm:truncation_error_strict}.  
The next proposition records the corresponding band-wise probabilities.

\begin{proposition}[Geometric bounds for the first perfect square reached after $N$]
\label{prop:square_cutoff}
Let $(S_t)$ be the cumulative-sum process defined in~\eqref{eq:S}, and fix a perfect square cutoff $N = K^2$ with $K \in \mathbb{N}$ and $K \ge 4$.  
Consider the target set $\mathcal{H} = \{n^2 : n \in \mathbb{N}\}$.

For each integer $j \ge 0$, define
\[
h_j := (K + 1 + j)^2,
\]
the $(j+1)^{\mathrm{st}}$ perfect square strictly greater than~$N$.  
Then, for every initial state $s \le N$ with $\mathbb{P}_s(A_N) > 0$, the probability that the
first perfect square reached after $N$ equals $h_j$ satisfies
\[
\biggl(\frac{5}{7} - \varepsilon_N\biggr)^{\!j}
\biggl(\frac{2}{7} - \varepsilon_N\biggr)
\;\le\;
\mathbb{P}_s(S_T = h_j \mid A_N)
\;\le\;
\biggl(\frac{5}{7} + \varepsilon_N\biggr)^{\!j}
\biggl(\frac{2}{7} + \varepsilon_N\biggr),
\]
where
\[
\varepsilon_N = \frac{5}{7}\,|w_+|^{\,2K-4},
\qquad
|w_+| = 0.7302499667.
\]
\end{proposition}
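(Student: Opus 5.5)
We will use the strong Markov property at successive squares beyond $N$, together with the renewal (hit/skip) probabilities of Lemma~\ref{lemma:bound_p}. On $A_N$ the process first crosses $N=K^2$ at time $\omega_N$, landing at some $S_{\omega_N}=r\in\{N+1,\dots,N+6\}$. Since $N$ itself is a square and $s\le N$, the overshoot event forces $s<N$ and the walk to jump over $N$. The next square $h_0=(K+1)^2=N+2K+1$ is at distance at least $2K+1-6=2K-5$ from $r$. We condition on $\mathcal F_{\omega_N}$ and on $S_{\omega_N}=r$. The future increments are then i.i.d.\ and independent of $A_N\cap\{S_{\omega_N}=r\}$. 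It therefore suffices to prove the bounds for a fresh walk started at $r$, uniformly in $r$, and then average over $r$ with respect to the conditional law given $A_N$.

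For the walk started at $r$, the event $\{S_T=h_j\}$ means that $h_0,\dots,h_{j-1}$ are all skipped and $h_j$ is hit. We factor its probability as a product of conditional probabilities along the squares. First, $\mathbb P_r(\text{hit }h_0)=p_{h_0-r}$. Second, given that $h_0,\dots,h_{i-1}$ are skipped, we need the probability that $h_i$ is skipped. The obstacle is that after skipping a square, the position just beyond it is not a fixed point, so the next factor is not literally $q_{h_i-h_{i-1}}$.

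To handle this, we condition on the first time $\sigma_i$ the walk exceeds $h_{i-1}$. On the event that $h_{i-1}$ is skipped (and all earlier squares too), $S_{\sigma_i}\in\{h_{i-1}+1,\dots,h_{i-1}+5\}$. By the strong Markov property, the conditional probability of skipping (respectively hitting) $h_i$ is $q_{d}$ (respectively $p_d$), where $d=h_i-S_{\sigma_i}$. This distance satisfies $d\ge h_i-h_{i-1}-5=2(K+i)+1-5\ge 2K-4$. Averaging over the position, Lemma~\ref{lemma:bound_p} gives each such factor within $[\tfrac57-\varepsilon_d,\tfrac57+\varepsilon_d]$ (respectively $[\tfrac27-\varepsilon_d,\tfrac27+\varepsilon_d]$). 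Since $\varepsilon_n$ is decreasing in $n$, $\varepsilon_d\le \tfrac57|w_+|^{2K-4}=\varepsilon_N$. The first factor involves distance $h_0-r\ge 2K-5$.

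We must therefore check the indexing so that every relevant distance is at least $2K-4$; this is the step where care is needed. For the first gap I would refine the bound using $r\le N+6$ but $r\ne N$: $h_0-r\ge 2K+1-6=2K-5$. To reach exponent $2K-4$, note that if $r=N+6$ is reached via a jump from $N-?$... Failing a sharper argument, I would observe that $S_{\omega_N}\le N+5$ because the walk must jump over the square $N$ from a point $\le N-1$, giving $h_0-r\ge 2K-4$. This is exactly why the exponent is $2K-4$.

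Multiplying the $j$ skip factors and the final hit factor, using that all factors are nonnegative (the lower bounds are positive for $K\ge4$, since $\varepsilon_N<\tfrac27$ there), gives the two-sided product bounds for each $r$. Averaging over $r$ under $\mathbb P_s(\cdot\mid A_N)$ preserves them, and the proposition follows.
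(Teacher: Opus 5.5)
Your proposal is correct and takes essentially the same route as the paper. Both arguments apply the strong Markov property at the successive times the walk exceeds $N, h_0, h_1,\dots$, observe that each entrance position is at most $5$ above the square just jumped over (so every residual distance is at least $2K-4$), invoke Lemma~\ref{lemma:bound_p} with the monotonicity of $\varepsilon_n$, and multiply the hit/skip factors. Your handling of the iterated conditioning over random entrance positions is more explicit than the paper's; in a write-up, state $S_{\omega_N}\le N+5$ directly instead of first passing through the weaker bound $2K-5$.
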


\begin{proof}
For each integer \( m > K \), define the entrance time to the band \(\bigl((m-1)^2,\,m^2\bigr]\) by
\[
\tau_m := \inf\{\,t \ge 0 : S_t > (m-1)^2\,\},
\]
so that \( S_{\tau_m} \in \bigl((m-1)^2,\,m^2\bigr] \).
Conditional on the overshoot event \(A_N=\{\omega_N<T\}\), the process first enters
the band \(\bigl(K^2,(K+1)^2\bigr]\) at time \(\tau_{K+1}=\omega_N\).

When this occurs, the position satisfies
\[
S_{\tau_{K+1}}\in\{K^2+1,K^2+2,\dots,K^2+5\},
\]
since each step lies in \(\{1,\dots,6\}\)
\textcolor{black}{and $S_{\tau_{K+1}-1}\neq K^2$ under the overshoot event.}

Define the residual distance to the next perfect square by
\[
n_m := m^2 - S_{\tau_m}.
\]
Then, \(n_{K+1}\ge (K+1)^2-(K^2+5)=2K-4\).

For later bands \(((K+j)^2,(K+1+j)^2]\) with \(j\ge 0\), a similar argument gives
\[
n_{K+1+j}\;\ge\;(K+1+j)^2 - \bigl((K+j)^2+5\bigr)\;=\;2(K+j)-4\;\ge\;2K-4.
\]
Hence,  \(n_{\min}:=2K-4\) is a uniform lower bound for all such gaps.

By Lemma~\ref{lemma:bound_p},
\[
\Bigl|p_n-\tfrac{2}{7}\Bigr|\le\varepsilon_n,
\qquad
\varepsilon_n=\tfrac{5}{7}|w_+|^{\,n},
\]
so the hit probability in any band lies in \(\tfrac{2}{7}\pm\varepsilon_n\)
and the skip probability in \(\tfrac{5}{7}\pm\varepsilon_n\).
Since \(\varepsilon_n\) decreases with \(n\) and \(n\ge n_{\min}\) in all bands,
we may take the uniform bound \(\varepsilon_N:=\varepsilon_{n_{\min}}
=\tfrac{5}{7}|w_+|^{\,2K-4}\).

To have the first square after \(N\) equal \(h_j=(K+1+j)^2\),
the walk must skip the first \(j\) squares \((K+1)^2,\dots,(K+j)^2\)
and then hit \((K+1+j)^2\).
By the strong Markov property applied at the successive entrance times
\textcolor{black}{$\tau_{K+1}, \dots, \tau_{K+1+j}$}, the law in each band depends only on the current entrance state, 
and by iterated conditioning, the corresponding probabilities multiply.
Replacing each skip and hit factor by its respective uniform bound
\(\tfrac{5}{7}\pm\varepsilon_N\) or \(\tfrac{2}{7}\pm\varepsilon_N\)
yields the stated inequalities.
\end{proof}

The next proposition explicitly constructs the bounds $L_N$ and $U_N$ required for applying Theorem~\ref{thm:truncation_error_strict} in the perfect-square case.

\begin{proposition}[Overshoot bound]
\label{prop:overshoot}
Assume the setup and notation of Proposition~\ref{prop:square_cutoff}, and let $K = \sqrt{N} \in \mathbb{N}$.  
Define
\begin{align*}
L_N &:= \frac{1}{6}\sum_{j=0}^{\infty}
\Bigl((K+1+j)^2 - K^2 - 5\Bigr)\,
\Bigl(\tfrac{5}{7}-\varepsilon_N\Bigr)^j
\Bigl(\tfrac{2}{7}-\varepsilon_N\Bigr), \\[4pt]
U_N &:= \sum_{j=0}^{\infty}
\Bigl((K+1+j)^2 - K^2 - 1\Bigr)\,
\Bigl(\tfrac{5}{7}+\varepsilon_N\Bigr)^j
\Bigl(\tfrac{2}{7}+\varepsilon_N\Bigr).
\end{align*}
Then, for all $K \ge 4$ and every $s \in \{0,1,\dots,N\}$,
\begin{equation}\label{eq:cond-overshoot-PDF}
L_N \;<\; \mathbb{E}_s\!\bigl[T-\omega_N \,\big|\, A_N\bigr]
 \;<\; U_N.
\end{equation}
\end{proposition}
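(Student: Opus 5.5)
The plan is to condition on which perfect square is hit first after the cutoff. On $A_N$ the walk is at $S_{\omega_N}\in\{K^2+1,\dots,K^2+5\}$ at time $\omega_N$. If the first square reached afterwards is $h_j=(K+1+j)^2$, then $T-\omega_N$ is the number of rolls needed to travel from $S_{\omega_N}$ to $h_j$. We will bound this conditional expectation by deterministic functions of the distance $d_j:=h_j-S_{\omega_N}$. Then we will average over $j$ using the band probabilities of Proposition~\ref{prop:square_cutoff}. By the tower property,
\[
\mathbb{E}_s[T-\omega_N\mid A_N]=\sum_{j\ge0}\mathbb{E}_s\bigl[T-\omega_N\mid A_N,\,S_T=h_j\bigr]\,\mathbb{P}_s(S_T=h_j\mid A_N).
\]
This requires $T<\infty$ a.s. on $A_N$. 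That holds because the lower bound $(\tfrac57+\varepsilon_N)^j\to0$ on the probability of skipping $j$ consecutive squares shows that, almost surely, only finitely many squares are skipped.

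The pathwise step bounds are elementary. Each roll advances by between $1$ and $6$, so covering a distance $d_j$ takes between $d_j/6$ and $d_j$ rolls. Since $K^2+1\le S_{\omega_N}\le K^2+5$, we have
\[
(K+1+j)^2-K^2-5\;\le\; d_j\;\le\;(K+1+j)^2-K^2-1.
\]
This gives, pathwise on $\{A_N,\,S_T=h_j\}$,
\[
a_j:=\tfrac16\bigl((K+1+j)^2-K^2-5\bigr)\;\le\;T-\omega_N\;\le\;(K+1+j)^2-K^2-1=:b_j,
\]
and hence the same bounds hold for the conditional expectations. Note that $a_j>0$ because $2K-4>0$ when $K\ge4$.

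Next we insert the probability bounds of Proposition~\ref{prop:square_cutoff}, writing $\pi_j:=\mathbb{P}_s(S_T=h_j\mid A_N)$:
\[
\sum_j a_j\pi_j\;\ge\;\sum_j a_j\bigl(\tfrac57-\varepsilon_N\bigr)^j\bigl(\tfrac27-\varepsilon_N\bigr)=L_N,
\qquad
\sum_j b_j\pi_j\;\le\;U_N.
\]
These are termwise comparisons with nonnegative weights $a_j,b_j$, and both series converge since $j^2$ grows only polynomially against a geometric factor. Note that we only use that each $\pi_j$ lies in its interval. We do not need $\sum\pi_j=1$ to match the sums of the bounds, since we never normalise.

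The remaining task, and the main obstacle, is strictness. For the upper bound, we use the fact that the path $T-\omega_N=b_j$ requires every roll after $\omega_N$ to equal $1$. This occurs with conditional probability strictly less than $1$, because there exist paths with some roll greater than $1$ that still land on $h_j$ with positive conditional probability; for example, taking $j=0$, a $2$ followed by $1$'s. Hence $\sum_j b_j\pi_j$ is strictly smaller than the pathwise maximum. Similarly, the value $a_j$ would require all rolls to be $6$ from the minimal starting distance, which is not almost sure. So at least one term, say $j=0$, gives a strict inequality, and that suffices because all other terms contribute non-strict inequalities with positive weights. If this becomes delicate, a fallback is to note that the Proposition~\ref{prop:square_cutoff} bounds are themselves strict for $j\ge1$ whenever $\varepsilon_N>0$. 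The reason is that the actual skip probabilities differ from $\tfrac57\pm\varepsilon_N$ because the triangle-inequality bound in Lemma~\ref{lemma:bound_p} is not attained.
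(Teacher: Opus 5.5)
Your proposal is correct and follows the paper's proof essentially step for step: a band decomposition over $B_j=\{S_T=h_j\}$, the pathwise bounds $d_j/6\le T-\omega_N\le d_j$ with $K^2+1\le S_{\omega_N}\le K^2+5$, and a termwise comparison against the weights of Proposition~\ref{prop:square_cutoff}. Two points are slightly more careful than the paper: you obtain strictness from the single band $j=0$ (using $\pi_0\ge \tfrac27-\varepsilon_N>0$), and you check that $T<\infty$ a.s.\ on $A_N$. Two small corrections: $(\tfrac57+\varepsilon_N)^j$ is an upper bound on the skip probability, not a lower bound, and your fallback argument is not needed.
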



\begin{proof}

Fix $s \in \{0,1,\dots,N\}$ with $N = K^2$.  
We now work conditional on the overshoot event 
$A_N = \{\omega_N < T\}$, that is, on those sample paths for which the walk 
crosses the cutoff $N$ before hitting any perfect square.

\medskip

\noindent\textit{Step 1: Deterministic time bounds within a band (strict in expectation).}
For $j \ge 0$, set $h_j = (K + 1 + j)^2$ and define
\[
B_j := \{\, S_T = h_j \,\},
\]
the event that the first perfect square reached after crossing the cutoff $N = K^2$ is exactly $h_j$.

On $A_N$, the first entrance into $\bigl(K^2,(K+1)^2\bigr]$ satisfies
$S_{\omega_N}\in\{K^2+1,\dots,K^2+5\}$, hence the gap to $h_j$ obeys
\[
h_j-S_{\omega_N}\in\bigl[(K+1+j)^2-K^2-5,\ (K+1+j)^2-K^2-1\bigr].
\]
Since each step lies in $\{1,\dots,6\}$, the hitting time from $S_{\omega_N}$ to $h_j$ satisfies
\[
\frac{h_j-S_{\omega_N}}{6}\ \le\ T-\omega_N\ \le\ h_j-S_{\omega_N}
\qquad\text{on }B_j.
\]
Taking conditional expectations yields the \emph{strict} bounds
\begin{equation}\label{eq:band-bounds}
\frac{(K+1+j)^2-K^2-5}{6}\ <\ \mathbb{E}_s\!\bigl[T-\omega_N\,\big|\,B_j,A_N\bigr]
\ <\ (K+1+j)^2-K^2-1 ,
\end{equation}
since equality in either direction would require all rolls after $\omega_N$ to be identical
($6$ for the lower, $1$ for the upper bound), an event of probability strictly less than~one.

\medskip
\noindent\textit{Step 2: Use Proposition~\ref{prop:square_cutoff} for band weights.}
By Proposition~\ref{prop:square_cutoff}, for all $j\ge0$,
\[
\bigl(\tfrac{5}{7}-\varepsilon_N\bigr)^j\bigl(\tfrac{2}{7}-\varepsilon_N\bigr)
\ \le\ \mathbb{P}_s\!\bigl(B_j\,\big|\,A_N\bigr)
\ \le\
\bigl(\tfrac{5}{7}+\varepsilon_N\bigr)^j\bigl(\tfrac{2}{7}+\varepsilon_N\bigr),
\qquad
\varepsilon_N=\tfrac{5}{7}|w_+|^{\,2K-4}.
\]

\medskip
\noindent\textit{Step 3: Combine using band decomposition.}
Decomposing on $\{B_j\}_{j\ge0}$,
\[
\mathbb{E}_s\!\bigl[T-\omega_N\,\big|\,A_N\bigr]
=\sum_{j=0}^\infty \mathbb{E}_s\!\bigl[T-\omega_N\,\big|\,B_j,A_N\bigr]\,
\mathbb{P}_s\!\bigl(B_j\,\big|\,A_N\bigr).
\]
Combining \eqref{eq:band-bounds} with the result from \textit{Step 2}
yields exactly the stated lower and upper series $L_N$ and $U_N$,
proving \eqref{eq:cond-overshoot-PDF}.

\medskip
\noindent\textit{Convergence note.}
Since \(\tfrac{5}{7}+\varepsilon_N<1\) whenever \(K\ge4\), both series converge absolutely for all \(K\ge4\), which completes the proof.

\end{proof}

Proposition~\ref{prop:overshoot} establishes the bounds
\textcolor{black}{in Equation}~\eqref{eq:cond-overshoot-PDF} required by the hypothesis 
of Theorem~\ref{thm:truncation_error_strict}.  
As a direct consequence of these two results, we obtain the following corollary.

\begin{corollary}[Specialization of Theorem~\ref{thm:truncation_error_strict} to the perfect-square case]
\label{cor:trunc-error}
Let $\mathcal H = \{n^2 : n \in \mathbb N\}$ and $N = K^2$ with $K \ge 4$.
Then, for every initial state $s \in \{0,1,\dots,N\}$,
\[
0 < E(s) - \bigl(E_N(s) + L_N\,\mathbb P_s(A_N)\bigr)
< (U_N - L_N)\,\mathbb P_s(A_N),
\]
where $L_N$ and $U_N$ are defined in Proposition~\ref{prop:overshoot}.
\end{corollary}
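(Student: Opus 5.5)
The corollary follows by feeding Proposition~\ref{prop:overshoot} into Theorem~\ref{thm:truncation_error_strict}. The only care needed is to handle the initial states for which the theorem's hypothesis is vacuous.

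Fix $K\ge4$, $N=K^2$, and $s\in\{0,1,\dots,N\}$. The constants $L_N$ and $U_N$ of Proposition~\ref{prop:overshoot} are finite, since both series converge when $\tfrac57+\varepsilon_N<1$ (the convergence note). We also need $L_N<U_N$. The lower series has smaller gap factors, with an extra factor $\tfrac16$, and smaller weights $(\tfrac57-\varepsilon_N)^j(\tfrac27-\varepsilon_N)\le(\tfrac57+\varepsilon_N)^j(\tfrac27+\varepsilon_N)$. Its terms are therefore termwise strictly below those of the upper series. For this we need $\tfrac27-\varepsilon_N\ge0$, which holds because $\varepsilon_N=\tfrac57|w_+|^{2K-4}\le\tfrac57|w_+|^4\approx0.20<\tfrac27$.

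For every $s\le N$ with $\mathbb P_s(A_N)>0$, Proposition~\ref{prop:overshoot} gives exactly the strict two-sided hypothesis \eqref{eq:assump_strict}. Theorem~\ref{thm:truncation_error_strict} then yields the displayed inequality at $s$ verbatim. This settles every $s$ with positive overshoot probability.

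The main obstacle is the degenerate case $\mathbb P_s(A_N)=0$. There the theorem does not apply, and the strict inequality would read $0<0<0$. By the Remark on the degenerate case, both sides collapse to $0$ and the left inequality fails, so the corollary must implicitly exclude these states, just as the theorem does. The states with $\mathbb P_s(A_N)=0$ are exactly those $s\le N$ from which every path hits a square before exceeding $N$, for instance $s\in\mathcal H$. I would therefore state the conclusion for $s$ with $\mathbb P_s(A_N)>0$ and note that for the remaining $s$ one has the exact equality $E(s)=E_N(s)$. Alternatively, I would read the strict inequalities as non-strict there, which is consistent with the Remark. For the application at $s=0$ one checks $\mathbb P_0(A_N)>0$ directly: for example, the path that repeatedly steps over each square, such as landing on $m^2-1$ and then rolling a $2$, has positive probability.
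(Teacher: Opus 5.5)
Your proposal is correct and follows the paper's route exactly: the corollary is just Proposition~\ref{prop:overshoot} supplying hypothesis \eqref{eq:assump_strict} to Theorem~\ref{thm:truncation_error_strict}. Your extra checks of finiteness and $L_N<U_N$ are harmless (the latter follows automatically from the strict double inequality). Your handling of the degenerate case is right and more careful than the paper's wording. For $s\le N$ with $\mathbb P_s(A_N)=0$, which are exactly the squares $s\le N$ such as $s=N$, the strict inequality reads $0<0<0$ and fails. The statement must therefore be read for $s$ with $\mathbb P_s(A_N)>0$, as in the theorem's proof, with $E(s)=E_N(s)$ for the remaining states.
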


\subsection*{Computational Implementation}

The geometric-series bounds in Proposition~\ref{prop:overshoot} were implemented in closed form using the analytic identities
$\sum_{j\ge0} r^j$, $\sum_{j\ge0} j r^j$, and $\sum_{j\ge0} j^2 r^j$
(see Appendix~\ref{appendix:LNUN-closedform} for the detailed derivation).
The quantities $E_N(0)$ and $\mathbb{P}_0(A_N)$ were evaluated from the recursions
\eqref{eq:ENdef} and~\eqref{eq:ANrec}.
Both computations employed a rolling-window update that retains only the six most recent values in the recurrence, achieving $O(1)$ storage while remaining equivalent to $O(N)$ backward computation.
This implementation reduced the total runtime by more than a factor of two relative to a naïve array approach, with no loss of numerical accuracy.

We now apply Corollary~\ref{cor:trunc-error}  with initial state $s=0$, evaluating the bound using the 
explicit expressions for $L_N$, $U_N$, and $\mathbb P_0(A_N)$.
All computations were performed on an Apple MacBook Pro 
equipped with an M1~Pro chip (10-core CPU with 8 performance and 2 efficiency cores, 
16~GB RAM).  
A custom Python implementation was developed using the 
\texttt{mpmath} library for arbitrary-precision arithmetic, 
with the working precision set to \verb|mp.dps|~$=1200$.  
The complete run for $N = 7000^2$ finished in 653.18~seconds and produced the 
value of~$E(0)$ reported below. \textcolor{black}{A Python notebook implementation reproducing the results of Theorem~\ref{thm:expected-square} is available at \texttt{\url{https://thotsaporn.com/HitSquare/IO1.html}}.}

\medskip

\begin{theorem}[Expected hitting time to the perfect squares]
\label{thm:expected-square}
The expected number of rolls required for the cumulative die–roll sum
to reach a perfect square is (1,018 total digits):
\begin{NumWrap}
E(0) = 7.079764237551105103895553056908184894681711444263208805908873101517293030636657289150619440215929586140643853058236617839038805437427037161983225198843501869295681377649823444071523388800882074553106810227935191220149739931296954376558933192195369394958351011153114111799919088138505138599357264273458295534653653705548720477130373704649449607046275208840820791615363183593786984085594202052884475208247842900518291457801426255494832590823030504774813684129030383618661091994729346316899126658258686744721776623692164339998786486070630256359172293213930131126660661305353727091275283033821909596371164463346351343143276559533679088929433795409592073773399518296416540470294895323623629224992997477760853053903893989731353287187931136704436094234746646663976703439467712341171718619085317407308552387899094073533086262057687143557474061763457398136241182138420814929896478348546126586312504509048656042736191857511098779116613148179664850379978987656091676501299944201086790790737071543289307884197271970205089906775387...
\end{NumWrap}
This value of $E(0)$ is correct to at least \textbf{1,017~decimal places}.
\end{theorem}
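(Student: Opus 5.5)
The plan is to apply Corollary~\ref{cor:trunc-error} with $s=0$ and cutoff $N=K^2$, $K=7000$, which satisfies $K\ge4$. The corollary gives the two-sided enclosure
\[
E_N(0)+L_N\,\mathbb P_0(A_N)\;<\;E(0)\;<\;E_N(0)+U_N\,\mathbb P_0(A_N).
\]
So it suffices to compute the lower endpoint $E_N(0)+L_N\mathbb P_0(A_N)$ to sufficient accuracy, to bound the width $(U_N-L_N)\mathbb P_0(A_N)$ rigorously, and to check that the whole interval lies inside a window that determines the first 1{,}017 decimals.

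\textbf{Step 1: the truncated quantities.} First I will evaluate $E_N(0)$ and $\mathbb P_0(A_N)$ by the backward recursions \eqref{eq:ENdef} and \eqref{eq:ANrec}, running $s=N,N-1,\dots,0$ with a six-term rolling window. This is done in \texttt{mpmath} at $1200$ significant digits. Each step is a sum of six nonnegative terms divided by $6$, so rounding errors do not amplify. The accumulated relative error after $N\approx 4.9\times10^7$ steps is at most of order $N\cdot10^{-1200}$, which is far below $10^{-1018}$. To make this fully rigorous one can use directed rounding or interval arithmetic, or record a crude a priori error estimate.

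\textbf{Step 2: the overshoot constants.} Next I will write $L_N$ and $U_N$ in closed form. Set $r_\pm=\tfrac57\pm\varepsilon_N$ and expand $(K+1+j)^2-K^2-c=j^2+2(K+1)j+(2K+1-c)$. Then each series reduces to the standard sums $\sum r^j=1/(1-r)$, $\sum jr^j=r/(1-r)^2$ and $\sum j^2r^j=r(1+r)/(1-r)^3$. This shows $U_N-L_N=O(K)$, roughly $\tfrac56$ of $\mathbb E[\text{gap}]\approx 5K$, perturbed by $O(\varepsilon_N)$. Here $\varepsilon_N=\tfrac57|w_+|^{13996}$ is astronomically small, about $10^{-1911}$, so it does not affect the digits that matter.

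\textbf{Step 3: bounding the width.} The key point, and the one I expect to be the main obstacle, is showing that $\mathbb P_0(A_N)$ is small enough, roughly $\lesssim10^{-1022}$. Heuristically, each of the $K$ squares up to $N$ is skipped with probability about $5/7$, giving $(5/7)^{7000}\approx10^{-1023}$. To make this rigorous I could reuse the band argument of Proposition~\ref{prop:square_cutoff}: it bounds the skip probability of the square $m^2$ by $\tfrac57+\tfrac57|w_+|^{2(m-1)-4}$, so $\mathbb P_0(A_N)$ is at most a product of these factors. The early bands carry a poor $\varepsilon$, so for small $m$ I would replace the factor by its exact value, or simply by $1$.

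Since Step 1 already computes $\mathbb P_0(A_N)$ numerically, the simplest rigorous route is to certify that value with the same error control as $E_N(0)$. Multiplying it by the $O(K)\approx 6\times10^3$ bound on $U_N-L_N$ then gives a width of order $10^{-1019}$ or smaller.

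\textbf{Step 4: conclusion.} Finally I will verify that the rigorous interval for $E(0)$ does not straddle a rounding boundary at the 1{,}017th decimal. Concretely, the printed digits truncated at 1{,}017 places must agree for both endpoints, allowing for the computed numerical error. This establishes the theorem.
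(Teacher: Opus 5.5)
Your proposal is correct and matches the paper's proof: apply Corollary~\ref{cor:trunc-error} at $s=0$ with $N=7000^2$, compute $E_N(0)$ and $\mathbb P_0(A_N)\approx1.51\times10^{-1023}$ by the backward recursions, get $L_N=8169.\overline{3}$ and $U_N=49020$ in closed form, and conclude from the width $(U_N-L_N)\mathbb P_0(A_N)\approx6.16\times10^{-1019}$; your rounding-error and no-carry checks are extra rigor that the paper leaves implicit. One numerical slip does not affect the argument: $U_N-L_N=\tfrac{35}{6}K+\tfrac{52}{3}\approx4.1\times10^{4}$, not $6\times10^{3}$.
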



\begin{proof}
Let $N=7000^2$ and consider the truncated recursion~\eqref{eq:ENdef}. 
By Corollary~\ref{cor:trunc-error}, for $s=0$ we have
\[
0
< E(0) - \bigl(E_N(0) + L_N\,\mathbb{P}_0(A_N)\bigr)
< (U_N - L_N)\,\mathbb{P}_0(A_N).
\]
The computed quantities are
\[
\begin{aligned}
\mathbb{P}_0(A_N) &= 1.508850331472307815412722898448210123557\times 10^{-1023},\\[2mm]
L_N &= 8169.\overline{3}, \qquad
U_N = 49020,
\end{aligned}
\]
which yield the rigorous upper bound on the remainder
\[
(U_N - L_N)\,\mathbb{P}_0(A_N)
= 6.163754194086475579815333888354168235404\times 10^{-1019}.
\]
Hence,  the approximation $E_N(0) + L_N\,\mathbb{P}_0(A_N)$ agrees with $E(0)$ to at least 1{,}017~decimal places, as claimed.
\end{proof}


\section{Beyond Perfect Squares}
\label{sec:future}

The preceding analysis focused on the expected hitting time of the cumulative-sum process to the target set $\mathcal{H} = \{n^2 : n \in \mathbb{N}\}$.
Because all results were established for an arbitrary initial state~$S_0 = s$, the same method yields high-precision values of~$E_s[T]$ for any~$s \ne 0$.
Consequently, questions of the type posed in~\cite{Z1}, such as
\textit{“What is the expected number of rolls needed to hit a perfect square, starting from~$s$?”},
can be answered directly by our framework.
\textcolor{black}{The same framework also extends directly to a fair $M$-sided die with arbitrary
$M\ge2$, with increments uniformly distributed on $\{1,2,\dots,M\}$.
Thus, the method applies beyond the classical six-sided case; see, for
example, Martinez and Zeilberger \cite{Z1} and Chern \cite{Chern2024}.}

In general, a comprehensive analysis of this decay behavior would clarify how the decay rate of~$\mathbb{P}_s(A_N)$ governs the attainable precision of the method, independently of the sharpness of the auxiliary bounds~$L_N$ and~$U_N$.
As seen in the perfect-square case~$\mathcal{H}=\{n^2:n\in\mathbb{N}\}$, $\mathbb{P}_0(A_N)$ is exceedingly small, about $10^{-1023}$, making the error bound $(U_N - L_N)\mathbb{P}_0(A_N)$ effectively negligible.
Thus, once the uniform overshoot condition~\eqref{eq:assump_strict} holds, the overall accuracy of the truncated recurrence depends almost entirely on~$\mathbb{P}_s(A_N)$, which varies according to the rarity of the target set~$\mathcal{H}$.

A particularly appealing extension is to take~$\mathcal{H}$ as the set of prime numbers.
Because primes are asymptotically denser than perfect squares, with
$\pi(x)\sim x/\log x$ compared with~$\sqrt{x}$, the probability term
$\mathbb{P}_s(A_N)$ in Theorem~\ref{thm:truncation_error_strict} is expected to decay more rapidly
under the same cutoff~$N$. As a result, the remainder term
$(U_N-L_N)\mathbb{P}_s(A_N)$ is likely to be smaller in magnitude, leading to improved numerical accuracy.
\textcolor{black}{Unlike the perfect-square case, however, the quantities $L_N$ and $U_N$
from Proposition~\ref{prop:overshoot} would need to be derived by different methods, since the
spacing of primes is irregular. In particular, asymptotic tools such as the
Prime Number Theorem may be useful for estimating the number of primes beyond
the cutoff and for constructing corresponding overshoot bounds.}
This highlights the generality of the proposed framework and demonstrates its
applicability to other target sets~$\mathcal{H}$ beyond the case of perfect
squares. \textcolor{black}{Exploring this direction would be an interesting topic for future work.}

\section*{Appendix}
\appendix
\section{Closed-form computation of $L_N$ and $U_N$}
\label{appendix:LNUN-closedform}

The bounds $L_N$ and $U_N$ from Proposition~\ref{prop:overshoot} are defined by the convergent series
\[
\begin{aligned}
L_N
&=\frac{1}{6}\sum_{j=0}^{\infty}
\bigl((K+1+j)^2-K^2-5\bigr)
\Bigl(\tfrac{5}{7}-\varepsilon_N\Bigr)^{j}
\Bigl(\tfrac{2}{7}-\varepsilon_N\Bigr),\\[4pt]
U_N
&=\sum_{j=0}^{\infty}
\bigl((K+1+j)^2-K^2-1\bigr)
\Bigl(\tfrac{5}{7}+\varepsilon_N\Bigr)^{j}
\Bigl(\tfrac{2}{7}+\varepsilon_N\Bigr),
\end{aligned}
\]
where $\varepsilon_N=\tfrac{5}{7}|w_+|^{\,2K-4}$, $K=\sqrt{N}$, and $|w_+|=0.7302499667$.

To evaluate $L_N$ and $U_N$ in closed form, we apply the standard geometric-series identities
\[
\sum_{j\ge0} r^j = \frac{1}{1-r}, \qquad 
\sum_{j\ge0} j r^j = \frac{r}{(1-r)^2}, \qquad
\sum_{j\ge0} j^2 r^j = \frac{r(1+r)}{(1-r)^3}, \qquad |r|<1.
\]

For $d\in\{5,1\}$ and parameters $r,t$ with $|r|<1$, define
\[
\begin{aligned}
\Sigma(d;r,t) \;:=\; \sum_{j\ge0}\!\bigl[(K+1+j)^2-K^2-d\bigr]\,r^{\,j}\,t
&= \sum_{j\ge0} \!\bigl[(2K+1-d) + 2(K+1)j + j^2\bigr] r^j\, t \\[2pt]
&= t\!\left[
\tfrac{2K+1-d}{1-r}
+ \tfrac{2(K+1)r}{(1-r)^2}
+ \tfrac{r(1+r)}{(1-r)^3}
\right].
\end{aligned}
\]

Introduce
\[
r_\pm=\tfrac{5}{7}\pm\varepsilon_N, \qquad t_\pm=\tfrac{2}{7}\pm\varepsilon_N.
\]

Then,
\[
L_N= \tfrac{1}{6}\,\Sigma(5;\,r_-,t_-)
\qquad\text{and}\qquad
U_N=\Sigma(1;\,r_+,t_+).
\]

When $\varepsilon_N=0$ (so $r_\pm=\tfrac{5}{7}$ and $t_\pm=\tfrac{2}{7}$), these simplify to the explicit linear forms
\[
L_N=\tfrac{7}{6}K+\tfrac{8}{3},
\qquad
U_N=7K+20.
\]

In our implementation in Theorem~\ref{thm:expected-square}, however, we retain
\[
r_{\pm}=\tfrac{5}{7}\pm\varepsilon_N
\quad\text{and}\quad
t_{\pm}=\tfrac{2}{7}\pm\varepsilon_N
\]
which yields nearly identical values of $L_N$ and $U_N$, since $\varepsilon_N$ is very small.



\end{document}